%\title{A note on multipliers between model spaces}
\NeedsTeXFormat{LaTeX2e}
\documentclass[12pt]{amsart}

\pagestyle{plain}

 \usepackage[doublespacing]{setspace}
\usepackage[utf8]{inputenc}
\usepackage{graphicx}
\usepackage{amscd}
\usepackage{amsmath}
\usepackage{amssymb}
\usepackage{amsthm}
\usepackage{amsfonts}
\usepackage{fancyhdr}
\usepackage{mathrsfs}
\usepackage{amsxtra}
\usepackage{MnSymbol} 

\usepackage{setspace}
%\singlespacing
\onehalfspacing
%\doublespacing
%\setstretch{1.1}
\usepackage{mathtools}

\pdfpagewidth 8.5in \pdfpageheight 11in \setlength\topmargin{0in}
\setlength\headheight{0in} \setlength\textheight{8.8in}
\setlength\textwidth{6.5in} \setlength\oddsidemargin{0in}
\setlength\evensidemargin{0in}
\newtheorem{theorem}{Theorem}
\newtheorem{lemma}{Lemma}
\newtheorem{corollary}{Corollary}
\newtheorem{proposition}{Proposition}

\newtheorem{example}{Example}
\newtheorem{remark}{Remark}

% math shortcuts
\def\beq{\begin{eqnarray*}}
\def\eeq{\end{eqnarray*}}
\def\bt{\begin{theorem}}
\def\et{\end{theorem}}
\def\bp{\begin{proposition}}
\def\ep{\end{proposition}}
\def\bl{\begin{lemma}}
\def\el{\end{lemma}}

\def\bp{{\bar\partial}}

\def\MM{{\mathcal M}}

\def\ker{\text{\rm ker}}
\def\dim{\text{\rm dim}}

\usepackage{color}

\def\R{{\mathbb R}}

\title{A Note on multipliers between model spaces}
\author[Fricain]{Emmanuel Fricain}
\address{Laboratoire Paul Painlev\'e, Universit\'e Lille 1, 59 655 Villeneuve d'Ascq C\'edex }
\email{emmanuel.fricain@math.univ-lille1.fr}

\author[Rupam]{Rishika Rupam}
\address{Laboratoire Paul Painlev\'e, Universit\'e Lille 1, 59 655 Villeneuve d'Ascq C\'edex}
\email{rishika.rupam@math.univ-lille1.fr}

\thanks{The authors were supported by Labex CEMPI (ANR-11-LABX-0007-01)}

\keywords{Multipliers, model spaces, Beurling--Malliavin densities}

\subjclass[2010]{30J05, 30H10}
\date{}

\begin{document}
\maketitle

 \begin{abstract} 
 In this note, we study the multipliers from one model space to another. In the case when the corresponding inner functions are meromorphic, we give both necessary and sufficient conditions ensuring this set of multipliers is not trivial. Our conditions involve the Beurling--Malliavin densities and are based on the deep work of Makarov--Poltoratski on injectivity of Toeplitz operators.
\end{abstract}

\section{Introduction}
 For a pair of inner functions $U$ and $V$ on the upper half-plane $\mathbb C_+=\{z\in\mathbb C:\Im{\rm{m}}(z)>0\}$, the multipliers set $\mathcal M(U,V)$ is the set of analytic functions $\Phi$ on $\mathbb C_+$ such that $$
\Phi K_U\subset K_V.
$$
Here $K_U$ (respectively $K_V$) is the model space associated to $U$ (respectively to $V$). See Section~\ref{sec:def-model} for the definition. A basic question here is whether or not
$$
\mathcal M(U,V)\neq \{0\}?
$$

A source of inspiration for this paper stems from \cite{GMR,MR3616198} which examined various pre-orders on the set
of partial isometries and contractions on Hilbert spaces and their relationship to their associated Liv\v{s}ic characteristic functions. It
turns out, for example, that when the Liv\v{s}ic characteristic functions $u$ and $v$ for two partial isometries $A$ and $B$ are inner (on the unit disc), the issue of whether or not $A$ is "less than" $B$ can be rephrased as to whether or $\mathcal M(u,v)\neq\{0\}$. Another motivation comes from the work of Crofoot \cite{Crofoot} who studied the onto multipliers. 

In \cite{fricain2016multipliers}, the authors characterize the multipliers from one model space to another in terms of kernels of Toeplitz operators and Carleson measures for model spaces.  However, it is widely understood that both the injectivity problem of Toeplitz operators and the Carleson measures question  for model spaces are rather difficult. As a result, it is not easy to apply the characterization obtained in \cite{fricain2016multipliers} in concrete situations. In this paper, we pursue this line of research. We consider the case when $U$ and $V$ are both meromorphic on $\mathbb C$. Our aim is to simplify the characterization proved in \cite{fricain2016multipliers} and to apply it to several examples. 

\section{Preliminaries} 

\subsection{Basic notations}
We use the standard notation $\mathcal H^p=\mathcal H^p(\mathbb C_+)$, $1\leq p\leq \infty$, for the Hardy space of the upper half-plane and as usual we identify functions in $\mathcal H^p$ with their boundary values on $\mathbb R$. We denote by $\Pi$ the Poisson measure on $\mathbb R$, 
$$
d\Pi(t)=\frac{dt}{1+t^2},
$$
and by $L^1_\Pi=L^1(\mathbb R,\Pi)$. The Hilbert transform of a function $h\in L^1_\Pi$ is defined as the singular integral
$$
\tilde h(x)= \lim_{\varepsilon\to 0} \frac{1}{\pi} \int_{|x-t|>\varepsilon}\left[ \frac{1}{x-t}+\frac{t}{1+t^2} \right] h(t)\,dt.
$$
Recall that outer functions $H$ are of the form 
$$
H=e^{h+i\tilde h}\qquad \mbox{on }\mathbb R,
$$ 
for some $h\in L^1_\Pi$. Recall also that if $h\in L^1_\Pi$, then $\tilde h\in L_\Pi^{o(1,\infty)}$ (the weak $L^1$ space), {\it i.e.}
$$
\Pi\{|\tilde h|>A\}=o\left(\frac{1}{A}\right),\qquad A\to \infty.
$$
See \cite[Corollary 14.6]{MR2500010}. 

We shall need the elementary Blaschke factor on $\mathbb C_{+}$ with zero at $i$: 
%\begin{equation}\label{777tttssss}
$$b_{i}(z) := \frac{z - i}{z + i},$$
%\end{equation}
and 
$$k_i(z)=\frac{1}{\pi}\frac{1}{z+i},$$
the corresponding  kernel (of $\mathcal H^2$) at $i$. 
\subsection{Meromorphic Inner Functions and model spaces}\label{sec:def-model}
Recall that an inner function $U$ on the upper half-plane is a bounded and analytic function on $\mathbb C_+$ with boundary values of modulus one almost everywhere on $\mathbb  R$. In this paper, we are interested in the situation when the inner function $U$ can be extended into a meromorphic function in $\mathbb C$. 
Such functions are called meromorphic inner functions (MIF) on the upper half-plane. They can be easily described via the standard Blaschke/singular factorization. All MIFs have the following form:
$$
U(z)=Ce^{iaz}\prod_{n=0}^\infty e^{i\alpha_n}\frac{z-w_n}{z-\bar w_n},\qquad (z\in\mathbb C_+),
$$
where $a$ is a non-negative constant, $w_n$ is a sequence of points in $\mathbb C_+$ tending to infinity as $n\to\infty$ and satisfying the Blaschke condition
$$\sum_{n=0}^\infty\frac{\Im{\rm{m}}(w_n)}{1+|w_n|^2}<\infty,$$
$C$ is a unimodular constant and $\alpha_n$ is a real number choosen so that 
$$
e^{i\alpha_n}=\frac{\left|\frac{i-w_n}{i-\overline{w_n}}\right|}{\frac{i-w_n}{i-\overline{w_n}}}.
$$
Associated to an inner function $U$ on $\mathbb C_+$, the model space $K_U$ is  defined by 
$$
K_U:= \mathcal{H}^2 \cap (U \mathcal{H}^2)^{\perp}.
$$
We also have the following equivalent definition
\begin{equation}\label{eq:defn-model-space}
K_{U} = \mathcal{H}^{2} \cap U \overline{\mathcal{H}^2},
\end{equation}
where $\overline{\mathcal{H}^2}$ is often regarded as the Hardy space of the lower half-plane. 

\subsection{Toeplitz operators and a characterization of multipliers}
Recall that to every $\varphi\in L^\infty(\mathbb  R)$, there corresponds the Toeplitz operator $T_\varphi:\mathcal H^2\longrightarrow \mathcal H^2$ defined by
$$
T_\varphi(f)=P_+(\varphi f),\qquad f\in\mathcal H^2,
$$
where $P_+$ is the orthogonal projection of $L^2(\mathbb R)$ onto $\mathcal H^2$. Using \eqref{eq:defn-model-space}, it is immediate to see that, when the function $U$ is inner, then 
\begin{equation}\label{eq:kernel-Toeplitz-antianalytic}
\ker\, T_{\overline U}=K_U.
\end{equation}
In \cite{fricain2016multipliers}, the following characterization of multipliers is proved. 
\begin{theorem}[Fricain--Hartmann-Ross]\label{thm:FHR}
Let $U$ and $V$ be inner functions with $|U'(x)| \asymp 1, x \in \mathbb R$, and let $\Phi$ be a function holomorphic on $\mathbb C_+$. Then the following are equivalent:
\begin{enumerate}
\item $\Phi\in \MM(U,V)$ 
\item $\Phi k_i\in\ker\, T_{\overline{b_{i}V} U}$ and $\sup_{x\in\R}\int_x^{x+1}|\Phi(t)|^2dt <\infty$.
\end{enumerate}
\end{theorem}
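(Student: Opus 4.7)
The plan is to treat the two implications separately, using a clean reformulation of the Toeplitz kernel condition as the backbone. The key algebraic observation is that on $\mathbb{R}$ one has $\overline{b_i}(x)k_i(x) = \overline{k_i}(x)$, so the symbol computation reads $\overline{b_iV}U\Phi k_i = \overline{V}U\Phi\overline{k_i}$; hence condition (2)'s Toeplitz part asks that $\overline{V}U\Phi \overline{k_i}\in \mathcal H^2_-$. Using the standard identity $\ker T_{\overline{I}J} = \{f\in\mathcal H^2 : Jf\in K_I\}$ for inner $I,J$, this becomes $U\Phi k_i \in K_{b_iV}$. Decomposing $K_{b_iV} = K_{b_i}\oplus b_iK_V = \mathbb C k_i \oplus b_iK_V$ and matching at $z=i$ (where $b_i$ vanishes), this reduces to the single clean requirement
\[
g(z) := \frac{U(z)\Phi(z) - U(i)\Phi(i)}{\pi(z-i)} \in K_V.
\]
The whole theorem then becomes: $\Phi\in\mathcal M(U,V)$ iff $g\in K_V$ and the uniform local $L^2$ bound holds.

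For $(1)\Rightarrow(2)$, the Carleson-type condition is standard. Since $\Phi K_U\subset K_V\subset\mathcal H^2$, the closed graph theorem shows multiplication by $\Phi$ is bounded $K_U\to\mathcal H^2$, so $|\Phi(x)|^2\,dx$ is a Carleson measure for $K_U$. Under $|U'|\asymp 1$, Carleson measures for $K_U$ are characterized by uniformly bounded mass on unit intervals (a result essentially due to Volberg--Treil/Aleksandrov), which yields the sup condition. To get $g\in K_V$, I would test the multiplier hypothesis on the ``conjugate reproducing kernel'' $\tilde k_i^U(z) = (U(z)-U(i))/(2\pi i(z-i))$, which is a scalar multiple of $C_U k_i^U \in K_U$. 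The identity $\Phi \tilde k_i^U = \tfrac{1}{2i}g + \tfrac{U(i)(\Phi(i)-\Phi)}{2\pi i(z-i)}$, combined with $\Phi k_i^U\in K_V$, yields an algebraic system from which $g\in K_V$ is extracted.

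For $(2)\Rightarrow(1)$, the sup condition gives $\Phi: K_U\to\mathcal H^2$ bounded. For $f\in K_U$, write $f = U\overline{h}$ on $\mathbb R$ with $h\in \mathcal H^2$ (the standard representation from $K_U = \mathcal H^2\cap U\overline{\mathcal H^2}$). The defining identity $U\Phi = U(i)\Phi(i) + \pi(z-i)g$ gives, on $\mathbb R$,
\[
\Phi f = U(i)\Phi(i)\overline{h} + \pi(z-i)\,g\,\overline{h}.
\]
Then $\overline V\Phi f = U(i)\Phi(i)\overline{Vh} + \pi(z-i)\,(\overline V g)\,\overline{h}$. The first term is in $\mathcal H^2_-$ since $Vh\in \mathcal H^2$. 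For the second, $\overline V g\in \mathcal H^2_-$ (as $g\in K_V$) and $\overline h\in\mathcal H^2_-$, so $(\overline V g)\overline h$ is an anti-analytic $H^1$-type product; the point is that the $(z-i)$ factor is absorbed by appealing to analyticity in the lower half-plane and using $f\in \mathcal H^2$ (which controls the pairing $g\overline h$).

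The main obstacle is the last step of $(2)\Rightarrow(1)$: rigorously controlling the factor $(z-i)$ in $(z-i)\,\overline V g\,\overline h$ so as to conclude membership in $\mathcal H^2_-$. This requires leveraging the analyticity of $\Phi$ on $\mathbb C_+$ and the boundary decay provided by $f\in K_U$ and $g\in K_V$ jointly, rather than separately — the individual bounds are too weak, but the coupling through the identity $U\Phi = U(i)\Phi(i) + \pi(z-i)g$ makes the cancellation work.
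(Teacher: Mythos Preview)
The paper does not prove this theorem at all: it is quoted as Theorem~\ref{thm:FHR} from \cite{fricain2016multipliers} (Fricain--Hartmann--Ross) and used as a black box. There is therefore no in-paper proof to compare your proposal against.

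Regarding the proposal itself, the overall strategy is reasonable, but the argument is incomplete precisely where you flag it. In $(2)\Rightarrow(1)$ you reduce to showing that $\pi(z-i)(\overline V g)\overline h\in\overline{\mathcal H^2}$ and then stop, saying the ``coupling through the identity $U\Phi=U(i)\Phi(i)+\pi(z-i)g$ makes the cancellation work.'' That is not an argument: this identity is exactly what produced the problematic term, so invoking it again buys nothing. What actually closes the gap is Smirnov's theorem. Write $\overline V g=\overline{g_1}$ with $g_1\in\mathcal H^2$; on $\mathbb R$ one has $(x-i)\overline{g_1 h}=\overline{(x+i)g_1 h}$. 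Since $\Phi f\in\mathcal H^2$ (from the Carleson condition) and the first summand $U(i)\Phi(i)\overline{Vh}$ is already in $\overline{\mathcal H^2}\subset L^2$, the second summand lies in $L^2$, hence $(z+i)g_1h$ has $L^2$ boundary values. But $(z+i)g_1h$ is a product of an outer function and two $\mathcal H^2$ functions, hence lies in the Smirnov class $N^+$; Smirnov's theorem then gives $(z+i)g_1h\in\mathcal H^2$, i.e.\ the term is in $\overline{\mathcal H^2}$ as needed. Your sketch of $(1)\Rightarrow(2)$ is also unfinished: you assert that from $\Phi k_i^U\in K_V$ and $\Phi\tilde k_i^U\in K_V$ one ``extracts'' $g\in K_V$, but the step requiring $\dfrac{\Phi-\Phi(i)}{z-i}\in K_V$ (or an equivalent) is not justified, and $\Phi$ is not assumed to lie in $\mathcal H^2$. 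Both directions need these missing pieces spelled out before the proof is complete.
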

Note that the second condition appearing in (2) says that the measure $|\Phi(t)|^2\,dt$ is a Carleson measure for $K_U$ (see \cite[Theorem 5.1]{MR1784683}), ensuring that $\Phi K_U\subset\mathcal H^2$. 

As one see from Theorem~\ref{thm:FHR}, the non injectivity of a certain Toeplitz operator is necessary for the set of multipliers being non trivial. The problem of injectivity of Toeplitz operators is a classical problem in analysis, being related to completeness of exponential systems on $L^2(0,2\pi)$. In \cite{makarov2005meromorphic,makarov2010beurling}, Makarov--Poltoratski extended the theory of Beurling Malliavin density to model spaces related to MIF. See next section for a brief discussion on their results. We just mention here an easy result which shall be used below.  

\begin{lemma}\label{Lem1}
Let $B$ be a finite Blaschke product, $\Theta$ an inner function which is not a finite Blaschke product and let $1\leq p\leq \infty$. Then
$$
\ker\, T_{B\overline{\Theta}}\cap \mathcal H^p\neq \{0\}.
$$
\end{lemma}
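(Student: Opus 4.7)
The plan is to reformulate the kernel condition in terms of the $L^p$-model space $K_\Theta^p := \mathcal H^p \cap \Theta\,\overline{\mathcal H^p}$ and then to exploit that $K_\Theta^p$ is infinite dimensional when $\Theta$ is not a finite Blaschke product.

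First I would carry out the reformulation. Let $n = \deg B$ and let $\alpha_1,\dots,\alpha_n \in \mathbb C_+$ be the zeros of $B$ counted with multiplicity. For $f \in \mathcal H^p$, the condition $f \in \ker T_{B\overline\Theta}$ means $B\overline\Theta f \in \overline{\mathcal H^p}$; multiplying by $\Theta$ (unimodular on $\mathbb R$) this is equivalent to $Bf \in \Theta\,\overline{\mathcal H^p}$, and since $Bf \in \mathcal H^p$ automatically, it is equivalent to $Bf \in K_\Theta^p$. Conversely, from any nonzero $g \in K_\Theta^p \cap B\mathcal H^p$ one recovers an element of the kernel by setting $f := g/B$, which lies in $\mathcal H^p$ (division by a finite Blaschke product preserves $\mathcal H^p$, since $|1/B|=1$ on $\mathbb R$ and the zeros of $g$ at the $\alpha_j$ cancel those of $B$) and satisfies $B\overline\Theta f = \overline\Theta g \in \overline{\mathcal H^p}$. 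Thus the lemma is equivalent to $K_\Theta^p \cap B\mathcal H^p \neq \{0\}$; and since $B\mathcal H^p$ is precisely the set of $\mathcal H^p$-functions vanishing with the right multiplicity at each $\alpha_j$, this intersection is the common null space inside $K_\Theta^p$ of $n$ continuous point-evaluation (and derivative) functionals.

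The remaining and central task is to prove that $K_\Theta^p$ is infinite dimensional for every $p \in [1,\infty]$; then the common null space of those $n$ continuous linear functionals has codimension at most $n$, hence is still infinite dimensional and certainly nontrivial. For $p=2$ this is classical and in fact equivalent to $\Theta$ not being a finite Blaschke product, but the general $p$ case is the main obstacle. I would handle it by a direct construction. If $\Theta$ has infinitely many zeros $\{\lambda_k\}$, take a finite Blaschke divisor $\Theta_N \mid \Theta$ of large degree $N$; for $1 < p \leq \infty$ the reproducing kernels $c_k/(z-\overline{\lambda_k})$ are bounded on $\mathbb C_+$ and already lie in $K_{\Theta_N}^p \subset K_\Theta^p$, whereas for $p=1$ one uses differences of reproducing kernels, which decay like $1/|z|^2$ at infinity and thus belong to $K_{\Theta_N}^1$. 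Letting $N \to \infty$ produces arbitrarily many linearly independent elements. If instead $\Theta$ has only finitely many zeros but a nontrivial singular inner factor $S$, one writes $\Theta = B_0 S$ and uses that $B_0 K_S^p \subset K_\Theta^p$, reducing to the corresponding infinite-dimensionality for $K_S^p$. Once $\dim K_\Theta^p = \infty$ is established, the codimension argument above yields a nonzero $g \in K_\Theta^p \cap B\mathcal H^p$ and the desired $f = g/B \in \ker T_{B\overline\Theta} \cap \mathcal H^p$.
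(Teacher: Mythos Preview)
Your proposal is correct and follows essentially the same route as the paper: both observe that $\ker T_{B\overline\Theta}\cap\mathcal H^p$ corresponds to the elements of $K_\Theta^p$ (the paper writes $K_\Theta\cap\mathcal H^p$) vanishing at the finitely many zeros of $B$, and then conclude via the linear-algebra fact that finitely many evaluation/derivative functionals cannot annihilate an infinite-dimensional space. The only difference is that the paper simply invokes the infinite-dimensionality of $K_\Theta\cap\mathcal H^p$ as known, whereas you additionally sketch a construction for it.
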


\begin{proof}
Let us write 
$$
B(z)=\prod_{j=1}^k \left(\frac{z-w_j}{z-\overline{w_j}}\right)^{m_j}
$$
and define the linear map
$$T:\left |\begin{array}{cccl}

 & K_\Theta \cap \mathcal H^p & \longrightarrow & \mathbb C^{N} \cr
& f &\longmapsto & (f^{(s)}(\lambda_j))_{\substack{1\leq j\leq k \\ 1\leq s\leq m_j}}
\end{array} \right.$$
where $N=\sum_{1\leq j\leq k}m_j$.  Since $\Theta$ is not a finite Blaschke product, we know that $K_\Theta\cap\mathcal H^p$ is of infinite dimension and then $T$ is not one-to-one. Hence there exists a function $f\in K_\Theta\cap\mathcal H^p$, $f\not\equiv 0$,  such that for every $1\leq j\leq k$, $1\leq s\leq m_j$, $f^{(s)}(\lambda_j)=0$. We can write $f=Bg$ for some $g\in\mathcal H^p$. It remains to note that using \eqref{eq:kernel-Toeplitz-antianalytic}, we have
$$
T_{B\overline\Theta}(g)=P_+(\overline\Theta Bg)=T_{\overline\Theta}(f)=0.
$$
\end{proof}
%The following technical result shall be also useful.
%\begin{lemma}
%Let $U$ and $V$ be two inner functions on $\mathbb C_+$. The following are equivalent:
%$$
%\dim(\ker\, T_{U\overline{b_iV}})=\dim(\ker\, T_{U\overline V})+1.
%$$
%\end{lemma}
%
%\begin{proof}
%Since $T_{U\overline{b_iV}}=T_{\overline{b_i}}T_{U\overline V}$, we have $\ker\, T_{U\overline V}\subset \ker\, T_{U\overline{b_iV}}$. 
%
%Fix a function $f_0\in \ker\, T_{U\overline{b_iV}}$ \supset
%\end{proof}
%
\subsection{Beurling Malliavin densities}
Let $\Lambda \subset \mathbb C_+ \cup \R$. In \cite{makarov2005meromorphic,makarov2010beurling}, Makarov and Poltoratski connected the Beurling-Malliavin density of $\Lambda$ to the injectivity of the kernel of a related Toeplitz operator. We briefly recall some of these facts here. 
First, let $\Lambda \subset \R$ be a discrete sequence. We say that $\Lambda$ is \textit{strongly $a$-regular} if \begin{equation} \label{strongregularity}
\int_{\mathbb R}\frac{|n_{\Lambda}(x) - ax|}{1+x^2}dx<\infty,
\end{equation}
where $n_\Lambda$ is the counting function of $\Lambda$ defined by
$$
n_\Lambda(x)=\begin{cases}
\mbox{card }(\Lambda\cap [0,x]) & \mbox{if }x\geq 0 \\
-\mbox{card }(\Lambda\cap [x,0])& \mbox{if }x<0.
\end{cases}
$$

It is known (see \cite{poltoratski2015toeplitz,mitkovski2010polya}) that the interior Beurling-Malliavin (BM) density of a discrete sequence $\Lambda$ can be defined as
 $$ D_*(\Lambda) := \sup\{a: \exists \mbox{ strongly $a$-regular subsequence } \Lambda' \subset \Lambda\}. $$ 
 Similarly, the exterior BM density is defined as 
 $$ D^*(\Lambda) := \inf\{a: \exists \mbox{ strongly $a$-regular supsequence } \Lambda' \supset\Lambda\}. $$ 
 These definitions extend to the upper half-plane as well \cite{makarov2010beurling} in the following way. Let $\Lambda \subset \mathbb C_+$ be a discrete sequence, then $$D_*(\Lambda) := D_*(\Lambda^*),  $$
where $\Lambda^*:= \{\lambda^*: \lambda \in \Lambda, \Re \lambda \neq 0 \}$, $\lambda^* := [\Re (\lambda^{-1})]^{-1}$.

\begin{example} \label{infbp}
Let $\Lambda=\{n+i\}_{n\in \mathbb Z}$. Then $D_*(\Lambda)=D^*(\Lambda)= 1$.
\end{example} 
\noindent \emph{Proof.} For $n\in\mathbb Z^*$, we have $\lambda^*_n=[\Re(1/(n+i))]^{-1} = (n^2+1)/n$. The counting function of this sequence is odd and $n_{\Lambda^*}(x) = n$, for $x\in(n+1/n, n+1 + 1/(n+1))$, $n>0$. Then 
\begin{equation} \int_{2}^\infty \frac{|n_{\Lambda^*}(x) - x|}{1+x^2}dx= \sum_{n\geq 1} \int_{n+1/n}^{n+1+1/(n+1)} \frac{x-n}{1+x^2}dx \leq \sum_n \frac{3}{2} \left(1+\frac{1}{n+1}-\frac{1}{n}\right).\frac{1}{n^2+1} < \infty.\end{equation}
Thus $\Lambda^*$ is itself a $1-$ strongly regular sequence and so $D_*(\Lambda)=D^*(\Lambda)=1$.
\qed

%\noindent In the work of Mitkovski and Poltoratski \cite{mitkovski2010polya}, the following equivalent definition of the interior BM %density has been proved and in \cite{makarov2005meromorphic}, the corresponding definition for the exterior density. 

It turns out that when $\Lambda$ is a discrete sequence on $\mathbb R$, then we can construct a MIF $\Theta$ with $\sigma(\Theta):=\{x\in \R: \Theta(x)=1\}=\Lambda$. Then it is proved in \cite{mitkovski2010polya,makarov2005meromorphic} that
$$D_*(\Lambda) = \frac{1}{2\pi}\inf\{a: \ker\, T_{S^a \overline \Theta}=\{0\} \}, $$
and $$D^*(\Lambda) = \frac{1}{2\pi}\sup\{a: \ker\, T_{\overline S^a \Theta }=\{0\} \}, $$
where $S$ is the singular inner function defined by $S(z)=e^{iz}$. 
 In terms of Toeplitz kernels, when $\Lambda$ is a Blaschke sequence in $\mathbb C_+$, 
 we can replace $\Theta$ by the Blaschke product $B_\Lambda$ 
 with zeroes on $\Lambda$, and we have
\begin{eqnarray} \label{bmdefinition}D_*(\Lambda) = \frac{1}{2\pi}\inf\{a: \ker\, T_{S^a \overline B_\Lambda}=\{0\} \},\\ 
D^*(\Lambda) = \frac{1}{2\pi}\sup\{a: \ker\, T_{\overline  S^a B_\Lambda}=\{0\} \}.\end{eqnarray}
Note that if $a>b$, then
\begin{equation}\label{eq:croissance-kernel}
\ker\,T_{\overline{S^b}B_\Lambda}\neq\{0\}\implies \ker\,T_{\overline{S^a}B_\Lambda}\neq\{0\}.
\end{equation}

\section{Main theorem and Examples}
In this section, we give a class of MIFs $U$ and $V$ for which the triviality of $\mathcal M(U,V)$ can be reduced to the injectivity of the Toeplitz operator $T_{U\overline V}$.  We end the section by showing examples of MIFs that fall into this category.

\begin{theorem}\label{mainthm}
Let $U$ and $V$ be MIFs with $|U'|\asymp 1$ on $\R$ and let $m:=\arg(U)-\arg(Vb_i)$ on $\R$. Suppose that either $m \not\in \widetilde L^1_\Pi$ or if $m=\tilde h$ for some  $h \in L^1_\Pi$, then  $e^{-h} \not\in L^1(\mathbb R)$. Then the following three conditions are equivalent.
\begin{enumerate}
\item $\dim$ $\ker\, T_{U\overline {Vb_i}}\geq 2$;
\item $\ker\, T_{U\overline V} \neq \{0\}$;
\item $\mathcal M(U,V) \neq \{0\}.$
\end{enumerate}
\end{theorem}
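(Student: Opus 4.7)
The strategy is to establish $(1)\Leftrightarrow(2)$ by direct manipulation of Toeplitz kernels, and then to connect both to $(3)$ through Theorem~\ref{thm:FHR} together with a dichotomy for $\ker\,T_{U\overline{Vb_i}}$ forced by the hypothesis on $m$.

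For $(2)\Leftrightarrow(1)$, given a nonzero $f\in\ker\,T_{U\bar V}$ I would check that both $f$ and $b_if$ lie in $\ker\,T_{U\overline{Vb_i}}$ and are linearly independent: the first inclusion uses $\overline{Vb_i}=\bar V\bar b_i$ with $\bar b_i\in\overline{\mathcal H^\infty}$, the second uses $|b_i|=1$ on $\mathbb R$ to get $U\overline{Vb_i}\cdot b_if=U\bar V f\in\overline{\mathcal H^2}$, and linear independence follows from $b_i$ being non-constant. Conversely, under $\dim\,\ker\,T_{U\overline{Vb_i}}\geq 2$, the evaluation $f\mapsto f(i)$ has codimension at most one on the kernel, so there exists a nonzero element with $f(i)=0$; then $f/b_i\in\mathcal H^2$ and the computation $T_{U\bar V}(f/b_i)=P_+(U\bar V\bar b_if)=T_{U\overline{Vb_i}}(f)=0$ provides a nonzero element of $\ker\,T_{U\bar V}$.

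The hypothesis on $m$ enters through the following dichotomy: $\ker\,T_{e^{im}}=\ker\,T_{U\overline{Vb_i}}$ is either trivial or infinite-dimensional. Arguing by contradiction, assume the kernel is finite-dimensional and nontrivial. The Hayashi--Sarason structure theorem for Toeplitz kernels with unimodular symbol produces a factorization $e^{im}=\bar b_i\,\bar B\,\bar I_0/I_0$ with $I_0\in\mathcal H^2$ outer and $B$ a finite Blaschke product. The outer function $1/(z-\bar\lambda)^2$ has boundary modulus $1/|t-\bar\lambda|^2$ and boundary phase $b_\lambda$, so $\arg b_\lambda=\widetilde{-2\log|t-\bar\lambda|}$ lies in $\widetilde{L^1_\Pi}$; summing over the zeros of $b_iB$ and combining with $\arg I_0=\widetilde{\log|I_0|}$, one obtains $m=\tilde h$ for some $h\in L^1_\Pi$ with $e^{-h}$ equal, up to a bounded weight, to $|I_0|^2$ multiplied by Blaschke-length factors, hence in $L^1$ since $I_0\in\mathcal H^2$. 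This contradicts the hypothesis, so the kernel is infinite-dimensional whenever nontrivial. Combining with Theorem~\ref{thm:FHR}, $(3)\Rightarrow(1)$ follows: a nonzero $\Phi\in\MM(U,V)$ produces a nonzero $\Phi k_i\in\ker\,T_{U\overline{Vb_i}}$, forcing $\dim\geq 2$.

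For $(1)\Rightarrow(3)$, the equivalence with $(2)$ furnishes a nonzero $f\in\ker\,T_{U\bar V}$, from which I set $\Phi(z):=\pi(z+i)f(z)$, analytic on $\mathbb C_+$ with $\Phi k_i=f\in\ker\,T_{U\bar V}\subset\ker\,T_{U\overline{Vb_i}}$, thus meeting condition~(1) of Theorem~\ref{thm:FHR}. The Carleson bound $\sup_x\int_x^{x+1}(t^2+1)|f(t)|^2\,dt<\infty$ is the delicate step: I would refine $f$ inside the infinite-dimensional kernel (guaranteed by the dichotomy) via the Hayashi decomposition $f=I_0\theta$, selecting $\theta\in K_\Theta$ with $O(1/(z+i))$ decay (for instance a suitable reproducing kernel of $K_\Theta$), and then invoke $|U'|\asymp 1$ together with the Carleson-measure characterization for $K_U$ to close the estimate. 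This Carleson verification is the principal obstacle, knitting together both the hypothesis on $m$ (for infinite dimensionality of the kernel, giving enough room to refine $f$) and the regularity assumption $|U'|\asymp 1$ (to reduce the Carleson question to a local interval estimate).
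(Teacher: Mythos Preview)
Your argument for $(1)\Leftrightarrow(2)$ is correct; the observation that both $f$ and $b_if$ lie in $\ker T_{U\overline{Vb_i}}$ whenever $f\in\ker T_{U\bar V}$ is a clean direct route that the paper does not take (it argues cyclically $(1)\Rightarrow(2)\Rightarrow(3)\Rightarrow(1)$). For $(3)\Rightarrow(1)$ your Hayashi--Sarason dichotomy is valid and in fact stronger than needed. The paper only rules out $\dim=1$, and does so elementarily: a one-dimensional Toeplitz kernel is spanned by an outer function $f$ (divide off the inner factor; the quotient is still in the kernel, hence a multiple of $f$), and the relation $U\overline{Vb_i}f=\bar g$ then forces $g$ outer with $|g|=|f|$, hence $g=f$ and $m=-2\widetilde{\log|f|}$, i.e.\ $m=\tilde h$ with $e^{-h}=|f|^2\in L^1(\mathbb R)$, contradicting the hypothesis. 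This is exactly the degree-one case of your structure-theorem computation, carried out by hand.

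The real divergence is in $(2)\Rightarrow(3)$, where you manufacture an obstacle that is not there. The paper takes a nonzero $\Phi\in\ker T_{U\bar V}$ \emph{itself} as the multiplier: if $U\bar V\Phi=\bar g$ on $\mathbb R$ then $U\overline{Vb_i}\,\Phi k_i=\overline{g\,k_i}\in\overline{\mathcal H^2}$, so $\Phi k_i\in\ker T_{U\overline{Vb_i}}$, and the Carleson bound $\sup_x\int_x^{x+1}|\Phi(t)|^2\,dt\leq\|\Phi\|_{2}^2$ is immediate from $\Phi\in\mathcal H^2$. There is no ``delicate step'', and the hypothesis on $m$ plays no role in this implication (cf.\ Remark~\ref{rem:hypothesis-main-result}). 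Your choice $\Phi=\pi(z+i)f$ forces you to control $(1+t^2)|f(t)|^2$, which then needs a second appeal to the infinite-dimensional Hayashi decomposition to locate $f=I_0\theta$ with $|\theta(t)|=O((1+|t|)^{-1})$; this can be completed (e.g.\ with $\theta=k^\Theta_i$, giving $(1+t^2)|f(t)|^2\lesssim|I_0(t)|^2\in L^1$), but it is an unnecessary detour.
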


\begin{proof}
$(1)\implies (2)$: Since  $\dim$ $\ker\, T_{U\overline {Vb_i}}\geq 2$, we can find a function $\Psi \in \ker\, T_{U\overline {Vb_i}}$, $\Psi\not\equiv 0$, such that $\Psi(i)=0$. Then we can write $\Psi=b_i\Psi_1$ with $\Psi_1\in\mathcal H^2$. Since 
$$
0=T_{U\overline{Vb_i}}(\Psi)=T_{U\overline V}(\Psi_1),
$$
we have 
$\Psi_1 \in \ker\, T_{U\overline V}$ and $\Psi_1\not\equiv 0$.

$(2)\implies (3)$: Let $\Phi \in \ker\, T_{U\overline V} $ be non zero.  Then there is a function $g\in \mathcal H^2$ such that on $\R$, we have
$$
\Phi.U\overline V = \overline g. 
$$ 
Since
$$ 
\frac{\Phi}{z+i}. U\overline V. \overline {b_i}= \frac{\overline g}{z+i}.\frac{z+i}{z-i}=\overline{\left(\frac{g}{z+i}\right)}\in \overline{\mathcal H^2},
$$
then $\Phi k_i \in \ker\, T_{U\overline{Vb_i}}$. Moreover, using $\Phi\in\mathcal H^2$, we also have
$$
\sup_{x\in\mathbb R}\int_x^{x+1}|\Phi(t)|^2\,dt<\infty.
$$
Thus by Theorem~\ref{thm:FHR}, we deduce that $\Phi\in \mathcal M(U,V)$, which gives $(3)$.\\

$(3)\implies (1)$: Now assume that $\mathcal M(U,V)\neq \{0\}$. Then, according to Theorem~\ref{thm:FHR}, we know that  $\ker\, T_{U\overline {Vb_i}} \neq \{0\}$. We argue by contradiction and suppose that $\dim$ $\ker\, T_{U\overline {Vb_i}} = 1$. First let us prove that $\ker\, T_{U\overline {Vb_i}}$ is generated by an outer function. Indeed, let $f\in\mathcal H^2$ such that $\ker\, T_{U\overline {Vb_i}}=\mathbb C f$ and write $f=\Theta f_0$ where $\Theta$ and $f_0$ are respectively the inner and outer part of $f$. Notice that 
$$
T_{U\overline {Vb_i}}(f_0)=P_+(U\overline {Vb_i \Theta}f)=T_{\overline\Theta} T_{U\overline {Vb_i}}(f)=0,
$$
whence $f_0\in \ker \,T_{U\overline {Vb_i}}$ and there exists a $\lambda\in\mathbb C$ such that $f_0=\lambda f$. Thus $f$ is outer. 

By definition, there is a function $g \in \mathcal H^2$ such that on $\R$, $$U\overline {Vb_i} f = \overline g.$$ Let $g=g_ig_0$ be the inner-outer factorization of $g$. Then 
$$
U\overline {Vb_i} f g_i=\overline{g_0}.
$$ 
We deduce $g_i f \in \ker\, T_{U\overline {Vb_i}} $. Since $\ker\, T_{U\overline {Vb_i}}$ is generated by $f$, we necessarily get that $g_i$ is a constant of modulus one which we may of course assume to be one. Using that $f$ and $g_0$ are outer and satisfy $|f|=|g_0|$ on $\mathbb R$, we obtain that $g_0=f$, and thus 
\begin{equation}\label{eq:argument-1}
Uf=Vb_i \overline{f}.
\end{equation}
Since $f$ is an outer function that is square integrable on $\R$, there must exist a function $h_1 \in L^1_\Pi(\R)$ such that $f=e^{h_1+i\tilde h_1}$ on $\R$ and $|f|=e^{h_1} \in L^2(\R)$.  We compare the arguments in \eqref{eq:argument-1} which gives
$$
m=\arg(U)-\arg(Vb_i)=-2\tilde{h}_1=\tilde{h},$$
with $h=-2h_1$. But $h\in L^1_\Pi$ and $e^{-h} \in L^1(\R)$ a contradiction to our hypothesis. Thus  $\dim$ $\ker\, T_{U\overline {Vb_i}} \geq 2$.
\end{proof}

\begin{remark}\label{rem:hypothesis-main-result}
\rm{
For the assertions (1)$\implies$ (2) and (2) $\implies$ (3), we only use that $U$ and $V$ are MIFs with $|U'|\asymp 1$ on $\R$. It is only in the assertion (3)$\implies$(2) that we use the full hypothesis of the theorem.}
\end{remark}

It is natural to wonder for which MIFs $U$ and $V$ are the hypotheses of the above theorem  satisfied. We give examples here to illustrate that for many pairs of MIFs, this is indeed the case.

Let us denote the singular inner function $e^{iz}$ by $S(z)$. We know that MIFs have the form $S^aB_\Lambda$, where $a\geq 0$ and $B_\Lambda$ is a Blaschke product. So we assume that $U=S^aB_{\Lambda_1}$ and $V=S^bB_{\Lambda_2}$. 

\begin{example} Let $U=S^a$ and $V=S^b$. Then we have
$$
\mathcal M(U,V) \neq \{0\} \Longleftrightarrow b\geq a.
$$
\end{example} 
Indeed, if $b=a$ then $U=V$ and of course the constant functions are multipliers from $K_U$ into $K_V$. We may assume now that $a\neq b$.  Note that $m=\arg(U)-\arg(Vb_i)=(a-b)x+2\arctan(x)$ on $\R$. Since $2\arctan(x)\in L^\infty(\R)$ and $(a-b)x \not\in L^{o(1,\infty)}_{\Pi}$, the function $m$ does not belong to the space $\widetilde L^1_\Pi(\R)$. Of course, we also have $|U'|\asymp 1$ on $\R$. Therefore, we can apply Theorem~\ref{mainthm} which gives that $\mathcal M(U,V)\neq\{0\}$ if and only if $\ker\,T_{U\overline{V}}\neq \{0\}$. Since $T_{U\overline V}=T_{\overline{S^{b-a}}}$, we get from \eqref{eq:kernel-Toeplitz-antianalytic} that 
$$
b>a\implies \ker\,T_{U\overline V}=K_{S^{b-a}}\implies \mathcal M(U,V)\neq\{0\}.
$$
On the other hand, if $b<a$, then $T_{U\overline V}=T_{S^{a-b}}$ and the operator $T_{U\overline V}$ is thus one-to-one, which gives $\mathcal M(U,V)=\{0\}$. Note that the result can also be obtained from Crofoot's paper \cite{Crofoot}. See also \cite[Proposition 2.2]{fricain2016multipliers}.

\begin{example} Let $U=S^aB_{\Lambda_1}$ and $V=S^bB_{\Lambda_2}$ such that $a\neq b$ and $B_{\Lambda_1}$ and $B_{\Lambda_2}$ are finite Blaschke products. Then
$$
\mathcal M(U,V) \neq \{0\} \Longleftrightarrow b>a.
$$
\end{example} 
Indeed, note that $m=\arg(U)-\arg(Vb_i)=(a-b)x+\arg(B_{\Lambda_1})-\arg(B_{\Lambda_2})+2\arctan(x)$. Since $B_{\Lambda_1}$ and $B_{\Lambda_2}$ are finite Blaschke products, $\arg(B_{\Lambda_1})- \arg(B_{\Lambda_2}) +2 \arctan(x) \in L^\infty(\R)$. The function $(a-b)x \not\in L^{o(1,\infty)}_\Pi$. Thus, the function $m \not\in \widetilde L^1_\Pi(\R)$. We also have $|U'|\asymp 1$ on $\R$. Therefore, we can apply Theorem~\ref{mainthm} which gives that $\mathcal M(U,V)\neq\{0\}$ if and only if $\ker\,T_{U\overline{V}}\neq \{0\}$. Now if $b>a$, then $T_{U\overline V}=T_{\overline{\Theta}B_{\Lambda_1}}$ where $\Theta$ is the inner function defined by $\Theta=S^{b-a}B_{\Lambda_2}$. Hence, by Lemma~\ref{Lem1},  $\ker\,T_{U\overline V}\neq \{0\}$ and thus $\mathcal M(U,V)\neq\{0\}$. Note that Coburn's Lemma (see \cite[Page 318]{nikolski1986treatise}) implies that if $b>a$, then $\ker\,T_{V\overline U}=\{0\}$. By symmetry, we thus get that if $b<a$, then $\mathcal M(U,V)=\{0\}$.

\begin{example} \label{eg4}
Let $U=B_{\Lambda_1}S^a$ and $V=S^b$ where $a\geq 0$, $b>0$, $B_{\Lambda_1}$ is an infinite Blaschke product, and let 
$D:= D^*(\Lambda_1)$. Assume that  $|U'|\asymp 1$ on $\R$ and $b-a\neq 2\pi D$. Then  
$$
\mathcal M(U,V)\neq \{0\}  \Longleftrightarrow b-a>2\pi D.
$$
\end{example}
Indeed, if $b-a>2\pi D,$ then by definition of $D$, $\ker\, T_{U\overline V}=\ker\, T_{B_{\Lambda_1}\overline S^{b-a}}\neq \{0\}$. By Theorem~\ref{mainthm} and Remark~\ref{rem:hypothesis-main-result}, we deduce that $\mathcal M(U,V) \neq \{0\}$. 

Let us now assume that $b-a<2\pi D$. Using once more the definition of $D$, there exists $\beta>b-a$ such that $\ker\,T_{\overline{S^\beta}B_{\Lambda_1}}=\{0\}$. Since 
$$
T_{U\overline V}(f)=T_{S^{a-b}B_{\Lambda_1}}(f)=T_{\overline{S^\beta}S^{\beta+a-b}B_{\Lambda_1}}(f)=T_{\overline{S^\beta}B_{\Lambda_1}}(fS^{\beta+a-b}),\qquad f\in\mathcal H^2,
$$
we get that $\ker\,T_{U\overline V}=\{0\}$. It thus remain to prove that  that $U$ and $V$ satisfy the hypothesis of Theorem~\ref{mainthm} to get that $\mathcal M(U,V)=\{0\}$. So let $m=\arg(U)-\arg(Vb_i)=\arg(B_{\Lambda_1}S^a)-\arg(S^b b_i)$. We argue by contradiction and assume that $m=\tilde h$ for some $h \in L^1_\Pi(\R)$ and $e^{-h}\in L^1(\R)$. Let us choose an $\varepsilon >0$ such that $b-a+\varepsilon <2\pi D$. By Lemma~\ref{Lem1}, we know that $\ker\,T_{b_i \overline {S^\epsilon}}\cap\mathcal H^\infty\neq \{0\}$.  
Therefore, we use \cite[Proposition 3.14]{makarov2005meromorphic} to see that $\arg(b_i \overline {S^\epsilon})$ is of the form $-\alpha + \tilde h_2$, where $\alpha$ is the argument of a MIF, $h_2\in L^1_\Pi(\R)$ and $e^{-h_2} \in L^\infty(\R)$. Thus, \begin{eqnarray*}\arg(B_{\Lambda_1}\overline{S^{b-a+\epsilon}})&=& \arg(B_{\Lambda_1}\overline{S^{b-a}b_i}b_i\overline{S^\epsilon})\\ 
&=& \arg(B_{\Lambda_1}\overline{S^{b-a}b_i}) + \arg(b_i\overline{S^\epsilon})\\
&=& -\alpha + \widetilde{h+h_2},
\end{eqnarray*}
where $h+h_2 \in L^1_\Pi(\R)$ and $e^{-(h+h_2)} \in L^1(\R)$. Using \cite[Proposition 3.14]{makarov2005meromorphic} once more, we have that $\ker\, T_{B_{\Lambda_1}\overline{S^{b-a+\epsilon}}} \neq \{0\}$, and we get a contradiction between \eqref{eq:croissance-kernel} and the fact that $b-a+\epsilon < 2 \pi D$.

\begin{example}\label{eg5}
Let $U=S^a$ and $V=B_{\Lambda_2}S^b$ with $a>0$, $b\geq 0$. Let $D:=D_*(\Lambda_2)$ and assume that $a-b\neq 2\pi D$. By similar computations as above, we can say that  
$$
\mathcal M(U,V)\neq \{0\}\Longleftrightarrow a-b<2 \pi D. 
$$

\end{example}

\begin{corollary}Let $\Lambda=\{n+i\}_{n\in\mathbb Z}$, $U=S^a$, $V=B_\Lambda$ and assume that $a\neq 2\pi$.
Then 
$$
\mathcal M(U,V)\neq\{0\}\Longleftrightarrow a<2\pi.
$$
\end{corollary}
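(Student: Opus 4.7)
The plan is to recognize this corollary as a direct specialization of Example~\ref{eg5}. Setting $b=0$ and $\Lambda_2=\Lambda=\{n+i\}_{n\in\mathbb Z}$, we have $V=B_\Lambda S^0=B_\Lambda$ exactly as in the corollary, and $U=S^a$ plays the same role in both statements. So the entire task reduces to checking the hypotheses of Example~\ref{eg5} and identifying the density $D$.

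First I would verify the structural hypotheses. We have $a>0$ (implicit, since the interesting regime is $a\neq 2\pi$ with $a>0$; the case $a=0$ gives $U\equiv 1$ and is trivial), $b=0\geq 0$, and $|U'(x)|=a$ on $\R$, so $|U'|\asymp 1$ automatically. The remaining hypothesis $a-b\neq 2\pi D_*(\Lambda_2)$ is exactly where Example~\ref{infbp} enters: it computed $D_*(\Lambda)=D^*(\Lambda)=1$ for this particular $\Lambda$ via the strong $1$-regularity of $\Lambda^*=\{(n^2+1)/n\}_{n\in\mathbb Z^*}$. Thus $D=1$ and the corollary's assumption $a\neq 2\pi$ becomes precisely $a-b\neq 2\pi D$. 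Applying Example~\ref{eg5} then yields
$$
\mathcal M(U,V)\neq\{0\}\Longleftrightarrow a-b<2\pi D \Longleftrightarrow a<2\pi,
$$
which is the desired equivalence.

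There is no substantial obstacle at this stage: the analytic heavy lifting (passing from $\mathcal M(U,V)\neq\{0\}$ to injectivity of $T_{U\overline V}$ via Theorem~\ref{mainthm}, and converting the latter into a BM-density statement) is already carried out in Example~\ref{eg5}, while the BM-density of the specific sequence $\{n+i\}_{n\in\mathbb Z}$ is handled in Example~\ref{infbp}. The corollary is simply the confluence of these two computations; the only conceptual point worth underlining is that since $\Lambda\subset\mathbb C_+$ rather than $\mathbb R$, we are implicitly using the definition $D_*(\Lambda):=D_*(\Lambda^*)$ recalled in Section~2.4, which is precisely what Example~\ref{infbp} was designed to apply.
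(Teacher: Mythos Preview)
Your proposal is correct and follows essentially the same approach as the paper: invoke Example~\ref{infbp} to get $D_*(\Lambda)=1$, then apply Example~\ref{eg5} with $b=0$ and $\Lambda_2=\Lambda$. The paper's proof is simply a terser version of what you wrote.
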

\begin{proof} 
By Example \ref{infbp},  we know that $D_*(\Lambda)=1$. Thus the conclusion follows from Example \ref{eg5}.
\end{proof}

\bibliographystyle{plain}
\bibliography{bibliography.bib}

\begin{thebibliography}{10}

\bibitem{MR1784683}
A.~D. Baranov.
\newblock Differentiation in the {B}ranges spaces and embedding theorems.
\newblock {\em J. Math. Sci. (New York)}, 101(2):2881--2913, 2000.
\newblock Nonlinear equations and mathematical analysis.

\bibitem{Crofoot}
R.~Bruce Crofoot.
\newblock Multipliers between invariant subspaces of the backward shift.
\newblock {\em Pacific J. Math.}, 166(2):225--246, 1994.

\bibitem{fricain2016multipliers}
Emmanuel Fricain, Andreas Hartmann, and William~T Ross.
\newblock Multipliers between model spaces.
\newblock {\em Studia Mathematica, to appear}, 2017.

\bibitem{GMR}
Stephan~Ramon Garcia, Robert T.~W. Martin, and William~T. Ross.
\newblock Partial order on partial isometries.
\newblock {\em J. Operator Theory}, 75(2):409--442, 2016.

\bibitem{makarov2005meromorphic}
N.~Makarov and A.~Poltoratski.
\newblock Meromorphic inner functions, toeplitz kernels and the uncertainty
  principle.
\newblock In {\em Perspectives in Analysis}, pages 185--252. Springer, 2005.

\bibitem{makarov2010beurling}
N.~Makarov and A.~Poltoratski.
\newblock Beurling-malliavin theory for toeplitz kernels.
\newblock {\em Inventiones Mathematicae}, 180(3):443--480, 2010.

\bibitem{MR2500010}
Javad Mashreghi.
\newblock {\em Representation theorems in {H}ardy spaces}, volume~74 of {\em
  London Mathematical Society Student Texts}.
\newblock Cambridge University Press, Cambridge, 2009.

\bibitem{mitkovski2010polya}
M.~Mitkovski and A.~Poltoratski.
\newblock P{\'o}lya sequences, toeplitz kernels and gap theorems.
\newblock {\em Advances in Mathematics}, 224(3):1057--1070, 2010.

\bibitem{nikolski1986treatise}
N.~K. Nikolski.
\newblock Treatise on the shift operator. spectral function theory. with an
  appendix by sv khrushch{\"e}v and vv peller, 1986.

\bibitem{poltoratski2015toeplitz}
A.~Poltoratski.
\newblock {\em Toeplitz Approach to Problems of the Uncertainty Principle},
  volume 121 of \textit{CBMS}.
\newblock American Mathematical Society, 2015.

\bibitem{MR3616198}
Dan Timotin.
\newblock On a preorder relation for contractions.
\newblock {\em Acta Sci. Math. (Szeged)}, 82(3-4):629--640, 2016.

\end{thebibliography}
\end{document}